\documentclass{article}
\usepackage{amsmath}
\usepackage{amsfonts}
\usepackage{amsthm}

\def\congruent{\equiv}

\def\ratQ{ \mathbb{Q} }  %used a kludge in plain TeX but here can use amsfont.
\def\realR{ \mathbb{R} } 
\def\notdiv{\not\vert\,}

\setlength{\textwidth}{6.5in} 
\setlength{\oddsidemargin}{0in} 
\setlength{\evensidemargin}{0in} 

\newtheorem{Theorem}{Theorem}
\newtheorem{Lemma}{Lemma}
\newtheorem{CorollaryThm1}{Corollary to Theorem }

\begin{document}

\begin{center}
{\large\bf 
The generalized Pillai equation $\pm r a^x \pm s b^y = c$ 
}
%third paper
\bigskip

Reese Scott

Robert Styer (correspondence author), Dept. of Mathematical Sciences, Villanova University, 800 Lancaster Avenue, Villanova, PA  19085--1699, phone 610--519--4845, fax 610--519--6928, robert.styer@villanova.edu
\end{center}

% revised 25 Feb 2010   19 Sept 2010   19 Jan 2011    
rev. 13 Feb 2011

\bigskip

\begin{abstract}  
In this paper we consider $N$, the number of solutions $(x,y,u,v)$ 
to the equation  $ (-1)^u r a^x + (-1)^v s b^y = c$
in nonnegative integers $x, y$ and integers $u, v \in \{0,1\}$,
for given integers $a>1$, $b>1$, $c>0$, $r>0$ and $s>0$. 
We show that $N \le 2$ when $\gcd(ra, sb) =1$ and $\min(x,y)>0$, except for a finite number of cases that can be found in a finite number of steps.  For arbitrary $\gcd(ra, sb)$ and $\min(x,y) \ge 0$, we show that when $(u,v) = (0,1)$ we have $N \le 3$, with an infinite number of cases for which $N=3$.   
\end{abstract}   

\bigskip

\section{Introduction}
 
In this paper we consider $N$, the number of solutions $(x,y,u,v)$ 
to the equation  
$$ (-1)^u r a^x + (-1)^v s b^y = c \eqno{(P)}$$
in nonnegative integers $x, y$ and integers $u, v \in \{0,1\}$,
for given integers $a>1$, $b>1$, $c>0$, $r>0$ and $s>0$.  

Equation (P) is a generalized form of the familiar Pillai equation $ r a^x - s b^y = c$, also referred to as Pillai's Diophantine equation as in \cite{BL} and \cite{W}.  This equation has been considered by many authors from a variety of standpoints; several subsets of the set $\{a,b,r,s,x,y\}$ have been treated as the unknowns.  Here we will consider only the case in which $a$, $b$, $r$, $s$, and $c$ are given.  For histories of other cases, as well as a more detailed history of the case under consideration here, we refer the reader to the surveys \cite{BBM} and \cite{W}.  

The result that $N$ is finite follows easily from an effective result of Ellison~\cite{E}, which is a response to an earlier result of Pillai~\cite{P1}.  Recent work has focused on finding small upper bounds on $N$.  

The case $rs=1$ has received particular attention.  Le~\cite{Le} showed that for $rs=1$, $N \le 2$ under the restrictions $x>1$, $y>1$, $a\ge 10^5$, $b \ge 10^5$, $u=0$, $v=1$, and $(a, b)=1$.  Bennett~\cite{Be} relaxed these restrictions to $x>0$, $y>0$, $a>1$, $b>1$, and later the authors~\cite{Sc-St2} removed the restrictions on $u$ and $v$, showing that $N \le 2$ whenever $rs=1$ and $\min(x,y)>0$, except for listed exceptional cases.  The results of \cite{Be} and \cite{Sc-St2} also remove the restriction $(a, b)=1$.  

The case $rs>1$ was treated by Shorey~\cite{Sh}, who showed that Equation (P) has at most 9 solutions in positive integers $(x,y)$ when $(u,v)=(0,1)$ and the terms on the left side of (P) are large relative to $c$.  Later, under the restrictions $x>1$, $y>1$, and $(ra, sb)=1$, Le~\cite{Le} obtained the following improved value for $N$:  

{\bf Theorem L \cite{Le}} 
If $u=0$, $v=1$, $x>1$, $y>1$, $a \ge e^e$, $b \ge e^e$, and $(ra, sb) =1$, then $N \le 3$.

In this paper we remove the restrictions $x>1$, $y>1$, $a \ge e^e$, $b \ge e^e$, and $(ra, sb)=1$ (see Theorem 3 in Section 3).  When $(ra, sb)=1$ and $\min(x,y)>0$, we show that $N \le 2$, even when $u$ and $v$ are unrestricted, except for a finite number of cases that can be found in a finite number of steps (see Theorem 1 in Section 2.)  The second author searched the ranges $1< b < a \le 100$, $1 \le r,s \le 1000$, $(ra,sb)=1$, $a \notdiv r$, $b \notdiv s$, $a$ and $b$ not perfect powers, looking for two solutions $(x,y)$ satisfying $1 \le x_1, x_2 \le 12$ and $1 \le y_1, y_2 \le 12$, and then looking for a third solution satisfying $x_3 \le 24$ and $y_3 \le 24$; the search found no cases of $N \ge 3$ other than $(a,b,c,r,s) = (3,2,1,1,1)$, $(3,2,5,1,1)$, $(3,2,7,1,1)$, $(3,2,11,1,1)$, $(3,2,13,1,1)$, and $(5,2,3,1,1)$.   We note that these exceptional $(a,b,c,r,s)$ all have $r=s=1$ and $a$ and $b$ both primes, so that they correspond to the list of exceptional cases in the elementary Theorem 7 of \cite{Sc-St1}.  

Theorem 1 uses a result of Matveev~\cite{Ma}, and Theorem 3 uses a recent result of He and Togb{\'e}~\cite{HT}.  
 
\section{$\gcd(ra,sb)=1$ and $\min(x,y)>0$, with $u$ and $v$ unrestricted}  %2

\begin{Theorem}  %1
Let $a>1$, $b>1$, $c>0$, $r>0$ and $s>0$ be positive integers with $(ra, sb)=1$.  
Then the equation
$$ (-1)^u r a^x + (-1)^v s b^y = c \eqno{(1)}$$
has at most two solutions $(x,y,u,v)$ in positive integers $x$, $y$ and integers $u, v \in \{0,1\}$, except for a finite number of cases which can be found in a finite number of steps.  More specifically, if (1) has more than two solutions, then $\max(a,b,r,s,x,y)< 8 \cdot 10^{14}$ for each solution.
\end{Theorem}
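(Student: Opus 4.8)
The plan is to suppose, for contradiction, that equation (1) has at least three solutions $(x_i, y_i, u_i, v_i)$, $i=1,2,3$, in positive integers, and to derive from this the explicit bound $\max(a,b,r,s,x,y) < 8 \cdot 10^{14}$, which by an exhaustive (finite) search then isolates the finite list of genuine exceptions. The starting point is to extract, from three solutions, two independent relations that eliminate the additive constant $c$. Subtracting one instance of (1) from another kills $c$ and yields equations of the shape $\pm r a^{x_i} \pm r a^{x_j} = \pm s b^{y_i} \pm s b^{y_j}$; since $(ra, sb)=1$, the factor $r$ must divide the right-hand combination of powers of $b$ (and $s$ the left), forcing strong divisibility constraints that control the $p$-adic valuations of the exponents. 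The essential observation is that differences of solutions convert the problem into one about $S$-unit equations or, more directly, into bounding linear forms in two logarithms $x \log a - y \log b$ together with the logarithms of $r/s$.

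First I would normalize: by symmetry I may assume $a > b$ (the case $a=b$ is handled separately, or ruled out by $(ra,sb)=1$ forcing a degenerate configuration), and I would order the three solutions by the size of, say, $a^{x}$. The next step is to bound the smaller exponents and the smaller of $a, b$ elementarily. Because $(ra,sb)=1$, congruence conditions modulo small powers of $a$ and $b$ pin down the residues of the solutions against one another; comparing two solutions modulo $a^{\min x}$ and modulo $b^{\min y}$ forces the exponents in each variable to be congruent, and iterating this lifting-the-exponent style argument bounds how far apart the $x_i$ (resp. $y_i$) can spread relative to the valuations of the coefficient differences. This is the gap argument that keeps the configuration rigid.

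The analytic heart, and the step I expect to be the main obstacle, is the application of Matveev's lower bound \cite{Ma} for linear forms in logarithms. From the extracted relations I would form a linear form $\Lambda = x \log a - y \log b + \log(r/s)$ (with appropriate signs), which is nonzero but provably very small in absolute value — exponentially small in $\max(x,y)$ — because the two sides of (1) nearly coincide for the largest solution. Matveev's theorem gives a lower bound for $|\Lambda|$ that is polynomial in $\log a$, $\log b$, $\log r$, $\log s$ and $\log\max(x,y)$. Confronting the exponential upper bound against the polynomial Matveev lower bound yields an inequality of the form $\max(x,y) < C (\log a)(\log b)(\log\max(x,y))\cdots$, and solving this transcendental inequality produces an absolute numerical bound on $\max(x,y)$, and thence on $a,b,r,s$. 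The delicate part is the bookkeeping of Matveev's constants (the heights $A_i = \max(\log|\alpha_i|, \ldots)$, the degree of the number field, which here is $1$ since everything is rational) and the optimization needed to drive the final constant down to the stated $8 \cdot 10^{14}$; this requires carefully separating the case where $\log(r/s)$ genuinely contributes a third logarithm from the case $r=s$ where only two logarithms appear (and a sharper two-logarithm estimate can be invoked).

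Finally, once $\max(a,b,r,s,x,y)$ is bounded by an explicit constant, the remaining finite verification is in principle a direct computation, and the claim that the exceptional cases "can be found in a finite number of steps" follows. I would remark that the computational search described in the introduction — over $b < a \le 100$, $r,s \le 1000$, exponents up to $24$ — is what one runs within the region the linear-forms bound leaves open (after further reductions such as Baker--Davenport or continued-fraction reduction to shrink $8 \cdot 10^{14}$ to a tractable range), confirming that the only surviving cases with $N \geq 3$ are the listed $(a,b,c,r,s)$ tuples with $r=s=1$.
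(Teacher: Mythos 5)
Your overall strategy is the same as the paper's (difference two solutions to get divisibility constraints, apply Matveev's three-logarithm bound to the largest solution, then solve a transcendental inequality numerically), but two essential steps are missing, and without them the argument does not close.

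First, your claim that $\Lambda = x\log a - y\log b + \log(r/s)$ is ``exponentially small in $\max(x,y)$'' is unjustified, and it is exactly the crux of the proof. All one knows a priori is $|\Lambda| = \log(1 + c/d)$, where $d = \min(ra^{x_3}, sb^{y_3})$; since $c$ is a given datum of the problem, it can perfectly well be of the same order of magnitude as $d$, in which case $\Lambda$ is of order $1$ and Matveev gives nothing. The paper closes this hole with its Lemma 4: three solutions force $c < (ZJ)^2$, where $Z$ is the largest exponent and $J = \max(a,b)$, so that $\log(1/\Lambda) > \log d - \log c$ is of order $Z\log(\min(a,b)) - O(\log(ZJ))$. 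Proving $c < (ZJ)^2$ is a substantive argument resting on the divisibility Lemma 1 and the gap principle of Lemma 3; Lemma 3 is also what shows $ra^{x_3} > c$, which pins down the sign pattern $|ra^{x_3} - sb^{y_3}| = c$ for the largest solution --- a point you skip entirely, even though $u,v$ are unrestricted in Theorem 1 and the wrong sign pattern would destroy the linear form. Your ``lifting-the-exponent style'' paragraph gestures at congruence constraints between solutions but never extracts any bound on $c$ from them.

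Second, your final step runs backwards. The inequality produced by Matveev has the shape $Z < C\log(\max(r,s))\log(\max(a,b))\log Z + \cdots$; this cannot be ``solved'' for an absolute bound on $Z$, because $a$, $b$, $r$, $s$ are unbounded parameters on the right-hand side, and a bound on the exponents does not by itself imply anything about $a,b,r,s$ (you assert ``and thence on $a,b,r,s$'' with no argument --- note the theorem's conclusion bounds $\max(a,b,r,s,x,y)$, not just the exponents). The paper needs its Lemma 5 --- three solutions force $Z \ge \max(r,s,a,b)$, proven again via the divisibility Lemma 1 --- precisely so that $\log(\max(r,s))$ and $\log(\max(a,b))$ can be replaced by $\log Z$, making the inequality self-contained in $Z$ and hence solvable, and so that the resulting bound $Z < 8\cdot 10^{14}$ transfers to $a,b,r,s$ as well. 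A lesser but still real omission: Matveev requires the three logarithms to be $\ratQ$-linearly independent, which fails, e.g., when $r,a$ are powers of one base and $s,b$ powers of another; the paper must quarantine those cases by invoking its earlier $rs=1$ results, whereas your proposal only separates off $r=s$, which is not the right dichotomy.
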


\begin{CorollaryThm1} 
Let $1 < b < a \le 15$, $c>0$, $1 \le r, s \le 100$ be positive integers with $(ra, sb)=1$.    
Then (1) has at most two solutions $(x,y,u,v)$ in positive integers $x$, $y$ and integers $u, v \in \{0,1\}$, unless $(a,b,c,r,s) = (3,2,1,1,1)$, $(3,2,5,1,1)$, $(3,2,5,1,2)$, $(3,2,7,1,1)$, $(3,2,11,1,1)$, $(3,2,13,1,1)$, $(3,2,13,1,2)$, $(4,3,13,1,1)$, or $(5,2,3,1,1)$.  
\end{CorollaryThm1}

Unlike the computer search mentioned in the introduction, the Corollary to Theorem 1 does not use the restrictions $a \notdiv r$, $b \notdiv s$, $a$ and $b$ not perfect powers. 

Throughout this section, we take $(ra,sb)=1$ and $\min(x,y)>0$.  

The proofs of Theorem 1 and its Corollary will follow several lemmas needed to prepare for the proof of Theorem 1.    

Since for each choice of $(x, y)$ giving a solution to (1), $(u,v)$ is determined, we will usually refer simply to a solution $(x,y)$. 

\begin{Lemma} %1
Let $a>1$, $b>1$, $r>0$, and $m>0$ be integers.  
Suppose there exist positive integers $y$ such that $(b^y \pm 1)/(r a^m)$ is an integer prime to $a$, and let $n$ be the least such $y$ possible regardless of the choice of sign.    
Then if $M > m$ and 
$$b^N \pm 1 = r a^M L $$
where the $\pm$ is independent of the above and $L$ and $a$ are not necessarily relatively prime, we must have 
$$ n {a^{M-m} \over 2^{g + h - 1}} \Bigm| N,$$
where $g=1$ and $h=0$ unless $r$ is odd, $a \congruent 2 \bmod 4$, and $m = 1$, in which case $g$ is the largest integer such that $2^g | b \pm 1$ (where the $\pm$ is chosen to maximize $g$), and $h$ is the largest integer such that $2^h | n$.  
\end{Lemma}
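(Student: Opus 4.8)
The plan is to prove the divisibility one prime at a time. I first record that the hypothesis already forces $\gcd(b, ra^m)=1$: any prime $q$ dividing both $b$ and $ra^m$ would have to divide $b^y\pm 1$, hence divide $1$. Thus $b$ is invertible modulo every power of $ra^m$, and the whole question becomes one about $p$-adic valuations and multiplicative orders. Write $\pm_1$ for the sign in the hypothesis defining $n$ and $\pm_2$ for the sign in $b^N\pm 1 = ra^M L$. It then suffices to establish, for each prime $q$, the local inequality $v_q(N)\ge v_q\!\left(n\,a^{M-m}/2^{g+h-1}\right)$, and to reassemble by unique factorization. Since $a^{M-m}$ affects only primes $p\mid a$ and the correction $2^{g+h-1}$ affects only $q=2$, I split into three cases: (i) odd $p\mid a$, (ii) odd $q\nmid a$, and (iii) $q=2$.

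The engine is a lifting-the-exponent computation. For an odd prime $p\nmid b$, set $z=\mathrm{ord}_p(b)$; since $z\mid p-1$ we have $p\nmid z$, so $v_p(b^k-1)=v_p(b^z-1)+v_p(k)$ whenever $z\mid k$, and $v_p(b^k-1)=0$ otherwise. I apply this with the sign dictated by the residue of $b^n$ (resp.\ $b^N$) modulo $p$, handling $b^N\equiv -1$ through $b^{2N}\equiv 1$ while tracking the extra factor $2$ in the order. The hypothesis gives $v_p(b^n\pm_1 1)=v_p(r)+m\,v_p(a)$ \emph{exactly}, whereas $b^N\pm_2 1=ra^ML$ gives $v_p(b^N\pm_2 1)\ge v_p(r)+M\,v_p(a)$. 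Subtracting the two valuation formulas, the common base term cancels and I obtain $v_p(N)-v_p(n)\ge v_p(a)(M-m)$, i.e.\ exactly $v_p\!\left(n\,a^{M-m}\right)$ on the right. The only bookkeeping is the case where $\pm_1$ and $\pm_2$ differ at $p$; because $p$ is odd and $-1\not\equiv 1\pmod p$, the discrepancy injects only terms already carried by the base valuations, so the crucial gain $v_p(a)(M-m)$ is untouched.

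For an odd prime $q\nmid a$ I must show $v_q(N)\ge v_q(n)$. Such $q$ can enter $n$ only through the order conditions $\mathrm{ord}_p(b)\mid n$ coming from the primes $p\mid a$ and through the requirement $v_q(b^n\pm_1 1)\ge v_q(r)$ coming from $q\mid r$. Because $ra^M\mid b^N\pm_2 1$, the exponent $N$ satisfies every one of these same congruence conditions (with the sign $\pm_2$), so each order dividing $n$ also divides $N$; when $\pm_1=-1$ the mere existence of $n$ forces $b^{\mathrm{ord}/2}\equiv -1$ modulo $ra^m$, which fixes the odd part of $n$ as the least admissible value. Invoking this minimality, together with the LTE identity of the previous paragraph at the primes $q\mid r$, yields $v_q(N)\ge v_q(n)$ for all odd $q\nmid a$.

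The delicate case, and the sole source of the correction $2^{g+h-1}$, is $q=2$, where lifting the exponent is governed instead by $v_2(b^k-1)=v_2(b-1)+v_2(b+1)+v_2(k)-1$ for even $k$ (and $v_2(b^k-1)=v_2(b-1)$ for odd $k$), with the companion formulas $v_2(b^k+1)=v_2(b+1)$ for odd $k$ and $v_2(b^k+1)=1$ for even $k$. Feeding the base valuation $v_2(r)+m\,v_2(a)$ and the top valuation $\ge v_2(r)+M\,v_2(a)$ into these formulas reproduces the gain $v_2(a)(M-m)$ in every configuration except when the base valuation is forced to be as small as possible, namely when $r$ is odd, $v_2(a)=1$, and $m=1$, so that $v_2(b^n\pm_1 1)=1$. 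In that regime the ``$+1$ versus $-1$'' asymmetry at $2$ (the inseparable term $v_2(b-1)+v_2(b+1)-1$) costs a bounded power of $2$, which I pin down as exactly $2^{g+h-1}$ with $g$ the largest power of $2$ dividing $b\pm 1$ (sign chosen to maximize it) and $h=v_2(n)$; outside this regime $g=1,\,h=0$ and the correction is $2^{0}=1$. I expect this $2$-adic analysis to be the main obstacle: it forces one to separate the parities of $n$ and $N$, to select the sign in $b\pm 1$ carrying the $2$-adic weight, and to verify that the loss never exceeds $2^{g+h-1}$. Combining the three local inequalities by unique factorization then gives $n\,a^{M-m}/2^{g+h-1}\mid N$.
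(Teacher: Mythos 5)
The paper itself gives no real proof of this lemma: it simply observes that the statement is Lemma 1 of \cite{Sc-St2} with $r$ inserted, ``provable in essentially the same way.'' Your framework --- reduce to the local inequalities $v_q(N)\ge v_q\big(n\,a^{M-m}/2^{g+h-1}\big)$ prime by prime, use lifting-the-exponent together with the exactness $v_p(b^n\pm_1 1)=v_p(r)+m\,v_p(a)$ at primes $p\mid a$, and isolate the exceptional $2$-adic regime $v_2(ra^m)=1$ (i.e.\ $r$ odd, $a\equiv 2\bmod 4$, $m=1$) --- is the right skeleton, and your case (i), as well as the part of case (iii) where $2\mid a$, is sound; you also identify the correction $2^{g+h-1}$ correctly.

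But case (ii) rests on a step that is false as written, and it is exactly the step carrying the lemma's subtlety. You claim ``each order dividing $n$ also divides $N$.'' When the sign $\pm_2$ makes $b^N\equiv -1$ modulo the relevant prime power, the order $w$ of $b$ does \emph{not} divide $N$; only $w/2$ does, with $N/(w/2)$ odd. At odd primes $q$ this costs nothing, since $w$ and $w/2$ have the same $q$-adic valuation, but at $q=2$ it costs exactly a factor of $2$ --- and this configuration ($2\nmid a$, so no exactness at $2$ is available and the subtraction mechanism of your case (iii) does not apply; conditions at $2$ arise from $2\mid r$, or simply from the order $\omega=\mathrm{ord}_{ra^m}(b)$ being even even when $ra$ is odd) is not covered anywhere in your sketch. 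The missing idea is to exploit the minimality of $n$ over \emph{both} signs: if a sign-$+$ solution $N$ exists modulo $ra^M$, then $b^y\equiv -1 \pmod{ra^m}$ is solvable, the sign-$+$ solutions are precisely the odd multiples of $\omega/2$ adjusted only at primes dividing $a$, and hence $n$ itself satisfies $v_2(n)\le v_2(\omega)-1\le v_2(N)$. Your parenthetical attempting this (``when $\pm_1=-1$ the mere existence of $n$ forces $b^{\mathrm{ord}/2}\equiv -1$ modulo $ra^m$\dots'') has the signs backwards and proves nothing: the existence of $n$ with the minus sign says nothing about $-1$ lying in the subgroup generated by $b$. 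Until this cross-sign comparison is made precise --- which also requires establishing that the admissible exponents form arithmetic progressions (full-order multiples, resp.\ odd half-order multiples, corrected only at primes dividing $a$), so that minimality really pins down $v_q(n)$ --- the inequality $v_q(N)\ge v_q(n)$ for $q\nmid a$, and in particular for $q=2$, remains unproven.
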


\begin{proof}
This lemma is essentially Lemma 1 of \cite{Sc-St2} with a slight generalization, and can be proven in essentially the same way.  
\end{proof}

\begin{Lemma}  % 2
If (1) has two solutions, then, if the value of $x$ is the same in both solutions, there is no third solution except when $(a,b,c,r,s) = (3,2,1,1,1)$, $(3,2,5,1,1)$, $(5,2,3,1,1)$, or $(3,2,7,1,1)$.  
\end{Lemma}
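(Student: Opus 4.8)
The plan is to exploit the rigidity forced by two solutions sharing the value of $x$: this reduces the problem almost immediately to $b=2$, $s=1$, after which a third solution is ruled out by pitting a divisibility constraint from Lemma 1 against a size constraint. First I would record that two distinct solutions with a common $x$, say $(x,y_1,u_1,v_1)$ and $(x,y_2,u_2,v_2)$, must satisfy $u_1 \ne u_2$: if $u_1=u_2$, subtracting the two instances of (1) gives $(-1)^{v_1} s b^{y_1} = (-1)^{v_2} s b^{y_2}$, whence $v_1=v_2$ and $y_1=y_2$ since $b>1$, so the solutions coincide. Because $u$ ranges over the two-element set $\{0,1\}$, no three solutions can share the same $x$; hence a third solution, if present, has $x_3 \ne x$.

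Next, taking $u_1=0$, $u_2=1$ (the only other possibility being symmetric) and discarding the sign patterns that make the left-hand side negative, subtraction of the two equations yields $2 r a^x = s\bigl(b^{y_2} \pm b^{y_1}\bigr)$, where in the minus case $y_2>y_1$. Factoring out $b^{\min(y_1,y_2)}$ and using $(ra,sb)=1$, which forces $\gcd\bigl(s\,b^{\min(y_1,y_2)}, r a^x\bigr)=1$, I conclude $s\,b^{\min(y_1,y_2)} \mid 2$. Since $b \ge 2$ and $\min(y_1,y_2) \ge 1$, this pins down $s=1$, $b=2$, and $\min(y_1,y_2)=1$, with $a$ and $r$ both odd. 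Writing $w=\max(y_1,y_2)-1 \ge 1$, the pair of solutions collapses to $r a^x = 2^w \mp 1$ and $c = 2^w \pm 1$; in particular $2^w \equiv \pm 1 \pmod{a^x}$, with $\bigl(2^w \mp 1\bigr)/(r a^x)=1$ coprime to $a$.

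It remains to preclude a third solution $(-1)^{u_3} r a^{x_3} + (-1)^{v_3} 2^{y_3} = c$ with $x_3 \ne x$. I would run through the finitely many sign patterns. Several are eliminated at once by size, the new term being squeezed strictly between two consecutive powers of $2$; the surviving patterns, after eliminating $c$ against the reduced data $r a^x = 2^w \mp 1$, $c=2^w\pm 1$ and cancelling the coprime factors of $2$, each leave a divisibility of the form $a^{x} \mid 2^{N}\mp 1$ with $N$ an explicit combination of $y_3$ and $w$ (for $x_3<x$ one gets instead $a^{x_3}\mid 2^{N}\mp 1$). This is exactly the situation governed by Lemma 1 with base $b=2$, where, $a$ being odd, the correction factor $2^{g+h-1}$ equals $1$: the exact relation $2^w\mp 1 = ra^x$ determines the least exponent $n$ at which the relevant power of $a$ first divides $2^n\mp 1$ with quotient coprime to $a$, and Lemma 1 then forces $N$ to be divisible by $n\,a^{x-m}$, a quantity growing like $a^{x-1}$. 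Because the same patterns also bound $N$ from above through the size relation, no admissible $N$ survives once $a^x$ (equivalently $w$) is even moderately large, and the finitely many small configurations that remain are checked by hand to produce exactly $(a,b,c,r,s)=(3,2,1,1,1)$, $(3,2,5,1,1)$, $(5,2,3,1,1)$, and $(3,2,7,1,1)$.

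The main obstacle will be this last step: setting up Lemma 1 correctly for the third solution, which requires identifying a valid pair $(m,n)$ with $m<x$ (so that the hypothesis $M>m$ holds with $M=x$) from the factorisation of $2^w\mp 1 = ra^x$, and then balancing the resulting congruence — admissible $y_3$ confined to a single residue class modulo an order of size $\asymp a^{x-1}$ — against the size constraint that keeps $y_3$ near $w$. Care is also needed in separating the two sign families ($2^N-1$ versus $2^N+1$, the latter requiring $-1$ to lie in the group generated by $2$ modulo $a^x$), in treating $x=1$ where only $x_3>x$ can occur, and in the final enumeration, to be sure the exceptional list is complete and free of spurious entries.
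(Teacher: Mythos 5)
Your opening reduction is correct and matches the paper: two solutions sharing $x$ force $2ra^{x}=sb^{\min(y_1,y_2)}\bigl(b^{|y_2-y_1|}\pm 1\bigr)$, hence (by coprimality) $b=2$, $s=1$, $\min(y_1,y_2)=1$, $ra^{x}=2^{w}\mp 1$, $c=2^{w}\pm 1$, and $w\le 2$ yields the first three exceptions. Your treatment of a third solution with $x_3<x$ is also essentially the paper's: there the third solution supplies the pair $(m,n)$ of Lemma 1 with $m=x_3<x$, the fixed relation $2^{w}\mp 1=ra^{x}$ supplies the target with $M=x$ and $N=w$, and the divisor $n\,a^{x-x_3}$ of $w$ collides with the size bound $w\approx x\log_2 a$, leaving only $ra^{x_3}=3$ and the exception $(3,2,7,1,1)$.

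The genuine gap is the case $x_3>x$, which your mechanism cannot close. Pairing the third solution with either original solution gives $ra^{x}\bigl(a^{x_3-x}\pm 1\bigr)=2^{\min(y_2,y_3)}\bigl(2^{N}\pm 1\bigr)$ with $N=|y_3-y_2|$, and since $a^{x_3-x}\pm 1\equiv\pm 1\pmod a$, the exact power of $a$ dividing $2^{N}\pm 1$ is again $a^{x}$: in Lemma 1 you are forced to take $M=x=m$, so the gain factor $a^{M-m}$ is $1$ and you get only an order divisibility $n\mid N$. Worse, even if you could manufacture a pair $(m,n)$ with $m<x$ (which the data no longer provide), the quantity it would constrain is $N=|y_3-y_2|$, or $x_3-x$ in the mirrored application -- and these have no upper bound at all, unlike the case $x_3<x$ where the constrained quantity is the fixed exponent $w$. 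So there is no ``size relation'' for the divisibility to fight against: congruence and size arguments only force a hypothetical third solution to be large and highly divisible, never produce a contradiction. The paper handles this case by a different route: it shows $y_3>w$ by size, works modulo $2^{w+1}$ and modulo $8$ to force the single sign pattern $ra^{x_3}+c=2^{y_3}$ with $x_3-x$ even, and then invokes Theorem 1 of \cite{Sc} -- a non-elementary external result, proved via imaginary quadratic fields, which forbids the two representations $D+c=2^{y_2}$ and $Dt^{2}+c=2^{y_3}$ (here $D=ra^{x}$, $t=a^{(x_3-x)/2}$) once $c\ne 3,5,13$; that hypothesis is exactly where $w>2$ is used. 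Some such input is unavoidable: for $(a,b,c,r,s)=(3,2,5,1,1)$ the equal-$x$ pair $3+2=5$, $-3+2^{3}=5$ coexists with the third solution $-3^{3}+2^{5}=5$ having $x_3>x$, so any argument for this case must distinguish $c=5$ from, say, $c=33$ (where $ra^{x}=31$ and no third solution exists); your divisibility-versus-size scheme never sees that distinction.
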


\begin{proof}  
Suppose (1) has two solutions, $(x_1, y_1)$ and $(x_2, y_2)$ with $x_1 = x_2$.  We can take $y_1 < y_2$.  Then 
$$2 r a^{x_1} = s b^{y_1} (b^h \pm 1) \eqno{(2)}$$
where $h = y_2 - y_1 > 0$.  We see that $b=2$, $s=y_1 = 1$, $r a^{x_1} = 2^h \pm 1$ and $c = 2^h \mp 1$.  If $h \le 2$, then $(a,b,c,r,s)$ must be one of the first three exceptional cases in the formulation of the lemma.  So we take $h>2$.  Now suppose (1) has a third solution $(x_3, y_3)$.  If $x_3=x_1$, then $(x_3, y_3)$ must be a duplicate solution.  If $x_3 < x_1$ then we have 
$$r a^{x_3} (a^{x_1 - x_3} \pm 1) = 2 ( 2^{y_3 - 1} \pm 1) \eqno{(3)}$$
where the $\pm$ signs are independent of each other and also independent of the sign in (2).  From (3) it follows that there exists a least number $n$ such that 
$$ 2^n \pm 1 = r a^{x_3} l \eqno{(4)}$$
where $(a,l) = 1$ and the sign in (4) is independent of the signs in (2) and (3).  By Lemma 1, there must be a prime $p | a$ such that $np | h$.  Also, $n \mid h$ implies $ra^{x_3} l \mid r a ^{x_1}$ (even when the sign in (4) does not match the sign in (2)) so that $l = 1$.  Now
$$2^{np} \pm 1 = r a^{x_3} p l_1 \eqno{(5)}$$
where $(l_1, r a)=1$ and the $\pm$ agrees with (4).  If $r a^{x_3} \ne 3$, we must have $l_1 > 1$, impossible since $np \mid h$ implies $r a^{x_3} p l_1 \mid r a^{x_1}$ (even if the sign in (5) does not match the sign in (2)).  So we must have $r a^{x_3} = 3$, forcing $2^h \pm 1 = 9$, yielding the fourth exceptional case in the formulation of the lemma.  

So we can assume $x_3 > x_1$.  Now
$$r a^{x_3} = \pm 2^{y_3} \pm c \eqno{(6)}$$
where the $\pm$ signs are independent.  Suppose $y_3 \le h$.  Then 
$$r a^{x_3} \ge 3 r a^{x_1} \ge 3 (2^h - 1) > 2^h+ (2^h +1) \ge 2^{y_3} + c$$
contradicting (6).  So $y_3 > h$.  
Letting $a^{x_3 - x_1} = 2^{k} t \pm 1$ with $t$ odd and $k \ge 2$, and considering (6) modulo $2^{h+1}$, we see that $k > h$ so that $x_3 - x_1$ is even (note $a \le 2^{h} +1 < 2^{h+1} - 1$).  So $a^{x_3-x_1} \congruent 1 \bmod 8$, and $ra^{x_1} \congruent -c \bmod 8$.  Now considering (6) modulo 8, we see that we must have 
$$ r a^{x_3} + c = 2^{y_3}$$
since no other combination of signs is possible.  But since we also have $ra^{x_1} + c = 2^{y_2}$ and $x_3 - x_1$ is even, we have a contradiction to Theorem 1 of \cite{Sc} since $c = 2^h \mp 1 \ne 3$, 5, or 13.  
\end{proof}

\begin{Lemma}  % 3
If (1) has two solutions $(x_1, y_1)$ and $(x_2, y_2)$ with $x_1 \le x_2$, we must have $r a^{x_2} > c/2$.  If in addition there is a third solution $(x_3, y_3)$ with $x_3 \ge x_2$ we must have $ r a^{x_3} > c$.    
\end{Lemma}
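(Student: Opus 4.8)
The plan is to prove both assertions by contradiction, assuming the displayed inequality fails and then exploiting $(ra,sb)=1$, the positivity $\min(x,y)>0$, and Lemma 2. For the first assertion, suppose instead $ra^{x_2}\le c/2$, so that $ra^{x_i}\le c/2<c$ for both solutions. Writing a solution as $\epsilon_i ra^{x_i}+\delta_i sb^{y_i}=c$ with $\epsilon_i,\delta_i\in\{\pm1\}$, the relation $\delta_i sb^{y_i}=c-\epsilon_i ra^{x_i}>0$ forces $\delta_i=+1$ and $sb^{y_i}=c-\epsilon_i ra^{x_i}\in[c/2,3c/2]$. Hence $sb^{y_1}$ and $sb^{y_2}$ lie within a factor of $3$, so $b^{|y_1-y_2|}\le3$. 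If the two solutions shared the same sign $\epsilon$, the ratio would be confined to $(1/2,2)$, forcing $y_1=y_2$ and then $x_1=x_2$, a repeated solution; so the signs $\epsilon$ are opposite. Naming the $\epsilon=+1$ solution $(X,Y)$ and the $\epsilon=-1$ solution $(X',Y')$, I get $sb^{Y}<c<sb^{Y'}$, whence $Y'>Y$, and together with $b^{|Y-Y'|}\le3$ this pins down $b\in\{2,3\}$ and $Y'=Y+1$. Subtracting the two equations leaves the single relation
$$ sb^{Y}(b-1)=r\bigl(a^{X}+a^{X'}\bigr). $$

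The crux is to contradict this using $(ra,sb)=1$ and $X,X'\ge1$. When $b=2$ (so $a$ is odd) the right side equals $ra^{\min(X,X')}(1+a^{|X-X'|})$, and since $\gcd(a,2s)=1$ the factor $a^{\min(X,X')}$ divides $s2^{Y}$ while being coprime to it, forcing $\min(X,X')=0$, contrary to positivity; the same coprimality argument kills $b=3$ when $a$ is odd. The genuinely delicate branch is $b=3$ with $a$ even: here $s$ must be odd, and a $2$-adic valuation count on $2s3^{Y}=r(a^{X}+a^{X'})$ forces $\min(X,X')=1$, $a\equiv2\pmod4$, and $r$ odd. I would then feed this back into the standing assumption $ra^{x_2}\le c/2$, expressing $c$ through $s3^{Y}$ in the two equivalent ways coming from the two solutions, to deduce $a^{\max(X,X')}\le a$, i.e. $\max(X,X')\le1$, again contradicting positivity. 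I expect this $b=3$, $a$ even sub-case to be the main obstacle, since it is the one branch that does not die from coprimality alone and instead needs the size hypothesis to close.

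The second assertion then follows quickly. Suppose there is a third solution with $x_3\ge x_2$ yet $ra^{x_3}\le c$. Applying the first part to the pair $(x_1,y_1),(x_2,y_2)$ gives $ra^{x_2}>c/2$, so $ra^{x_3}\le c<2\,ra^{x_2}$ and hence $a^{x_3-x_2}<2$; since $a\ge2$ this forces $x_3=x_2$. Thus two of the three distinct solutions share the same value of $x$, so by Lemma 2 the tuple $(a,b,c,r,s)$ is one of the four listed exceptional cases. It remains only to check those finitely many cases directly: each has $b=2$ with small $a$ and $c$, and an explicit enumeration of solutions shows the largest power $ra^{x_3}$ exceeds $c$ in every instance (the tightest being $3^2=9>7$ for $(3,2,7,1,1)$), contradicting $ra^{x_3}\le c$. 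Hence $ra^{x_3}>c$.
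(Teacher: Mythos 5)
Your proof of the second assertion is essentially the paper's: assuming $ra^{x_3}\le c$, the first assertion forces $x_3=x_2$, Lemma 2 then puts $(a,b,c,r,s)$ among its four exceptional tuples, and those are eliminated by direct inspection (the paper phrases this as $a^{x_3}\ge a^2\ge 9>c$). For the first assertion, however, you take a genuinely different and far more laborious route. The paper's proof is three lines: if $ra^{x_2}<c/2$, then $sb^{y_1}>c/2$ and $sb^{y_2}>c/2$; combining the two solutions gives
$$ra^{x_1}\bigl(a^{x_2-x_1}\pm 1\bigr)=sb^{\min(y_1,y_2)}\bigl(b^{|y_2-y_1|}\pm 1\bigr),$$
and since $\gcd(ra,sb)=1$, the whole block $sb^{\min(y_1,y_2)}$ must divide $a^{x_2-x_1}\pm 1$, whence
$$c/2>ra^{x_2}\ge a^{x_2-x_1}+1\ge sb^{\min(y_1,y_2)}>c/2,$$
a contradiction. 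The point you missed is that coprimality can be applied to the factor $sb^{\min(y_1,y_2)}$ on the right of the combined equation, not merely to $a^{\min(X,X')}$ on the left; that one observation makes your entire case structure (sign analysis, $b\in\{2,3\}$ and $Y'=Y+1$, parity of $a$, $2$-adic valuations) unnecessary. Your cases, for what it is worth, do go through: the same-sign case, the $b=2$ case, and the $b=3$ with $a$ odd case are correct as sketched, and in the $b=3$ with $a$ even case both the valuation count and the size deduction $a^{\max(X,X')}\le a$ are valid.

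But the last step of that hardest branch contains a genuine error: from $\max(X,X')\le 1$ you conclude ``contradicting positivity,'' and that is not a contradiction --- $X=X'=1$ is perfectly consistent with $\min(x,y)>0$. The contradiction lies elsewhere, and you must invoke it explicitly: either (i) your own $2$-adic count already forces $X\ne X'$ (if $X=X'$ then $v_2\bigl(r(a^X+a^{X'})\bigr)=1+v_2(r)+Xv_2(a)\ge 2$ while $v_2(2s3^Y)=1$), so $\max(X,X')\ge 2$, clashing with $\max(X,X')\le 1$; or (ii) substitute $X=X'=1$ into $2s3^Y=r(a^X+a^{X'})$ to get $s3^Y=ra$, impossible because $\gcd(ra,sb)=1$ and both sides exceed $1$. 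Either one-line patch closes the argument, but as written the branch you yourself identify as ``the main obstacle'' ends on a claimed contradiction that does not exist.
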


\begin{proof}
Assume (1) has two solutions $(x_1, y_1)$ and $(x_2, y_2)$ with $x_1 \le x_2$.  
Since $(ra, sb)=1$, $c$ is prime to $rasb$.  If $r a^{x_2} < c/2$, then $s b^{y_2} > c/2$ by (1), and also $r a^{x_1} < c/2$ so that $s b^{y_1} > c/2$.  But we have 
$$r a^{x_1} (a^{x_2 - x_1} \pm 1) = s b^{\min(y_1, y_2)} (b^{|y_2 - y_1|} \pm 1) $$
where the $\pm$ signs are independent, so that
$$c/2 > r a^{x_2} \ge a^{x_2 - x_1} +1 \ge s b^{\min(y_1, y_2)} > c/2,$$
a contradiction.  If there is a further solution $(x_3, y_3)$ with $x_3 \ge x_2$, then, by Lemma 2, we can take $x_1 < x_2 < x_3$, so that 
$$r a^{x_3} \ge 2 r a^{x_2} > c,$$
unless $(a,b,c,r,s)$ is one of the exceptional cases of Lemma 2, in which case it suffices to observe that we must have $a^{x_3} \ge a^2 \ge 9 > c$.  
\end{proof}

\begin{Lemma}  % 4
If (1) has three solutions $(x_1,y_1)$, $(x_2, y_2)$, and $(x_3, y_3)$ then 
$$c < (ZJ)^2$$
where $J = \max(a,b)$ and $Z = \max(x_1, x_2, x_3, y_1, y_2, y_3)$.
\end{Lemma}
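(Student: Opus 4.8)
The plan is to squeeze out of the three solutions enough divisibility information to realize $c$ as a divisor of a sum or difference of two prime powers whose exponents are the gaps between the $x_i$ and between the $y_i$, and then to argue that these gaps cannot all be large. First I would dispose of degenerate configurations: by Lemma 2, together with its mirror image obtained by interchanging the roles of $a$ and $b$ (equivalently of $x$ and $y$), I may assume $x_1,x_2,x_3$ are distinct and $y_1,y_2,y_3$ are distinct, since otherwise we fall into the finite exceptional list, where $c$ is bounded by an absolute constant and $c<(ZJ)^2$ is immediate. Writing the three solutions as $\epsilon_i r a^{x_i}+\delta_i s b^{y_i}=c$ with $\epsilon_i,\delta_i\in\{+1,-1\}$, I note that $\epsilon_i=\delta_i=-1$ is impossible since $c>0$, so only three sign patterns occur and the later case analysis stays finite. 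Throughout I would use that $(ra,sb)=1$ forces $\gcd(r,s)=\gcd(a,s)=\gcd(b,r)=1$, and that $c$ is prime to $rasb$, as already observed in the proof of Lemma 3.

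Next, for each pair of solutions $i,j$ I would subtract the two equations to eliminate $c$, obtaining a homogeneous relation $r(\pm a^{x_i}\pm a^{x_j})=s(\pm b^{y_i}\pm b^{y_j})$. Since $\gcd(r,s)=1$ this gives $s\mid(\pm a^{x_i}\pm a^{x_j})$ and $r\mid(\pm b^{y_i}\pm b^{y_j})$, whence $s<2a^{Z}$ and $r<2b^{Z}$. Solving instead the $2\times 2$ system in $r,s$ furnished by solutions $i$ and $j$, Cramer's rule yields $r\,V_{ij}=c(\pm b^{y_i}\pm b^{y_j})$ with $V_{ij}=\pm a^{x_i}b^{y_j}\pm a^{x_j}b^{y_i}\neq 0$ (it is nonzero since $\gcd(a,b)=1$ would otherwise force the solutions to coincide). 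Because $\gcd(c,r)=1$ this forces $c\mid V_{ij}$; dividing out the factor $a^{\min(x_i,x_j)}b^{\min(y_i,y_j)}$, which is prime to $c$, I obtain for every pair the clean statement that $c$ divides a combination $\pm a^{|x_i-x_j|}\pm b^{|y_i-y_j|}$ of two prime powers whose exponents are precisely the coordinate gaps.

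The hard part will be converting this divisibility into the polynomial bound, since the immediate estimate $c\le a^{|x_i-x_j|}+b^{|y_i-y_j|}$ is only exponential in the gaps, whereas $(ZJ)^2$ is polynomial. Here I would bring all three solutions to bear simultaneously. For opposite-sign solutions the three pairwise relations collapse to clean identities tying the consecutive $x$- and $y$-gaps together (for instance $\tfrac{a^{p}-1}{a^{p'}-1}=\tfrac{b^{m}-1}{b^{m'}-1}$), accompanied by an exact formula of the form $c=\tfrac{s\,b^{y_1}(b^{m}-a^{p})}{a^{p}-1}$ and its $a\leftrightarrow b$ companion; I would use these, together with the lower bounds $ra^{x_2}>c/2$ and $ra^{x_3}>c$ from Lemma 3, to show that the three gap-combinations cannot all exceed $(ZJ)^2$. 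Where a direct size comparison is insufficient, I expect to invoke Lemma 1: the divisibility $c\mid(\pm a^{|x_i-x_j|}\pm b^{|y_i-y_j|})$ supplies an order constraint that prevents the gaps from growing without producing a second, much smaller, instance of the same divisibility, and it is this interplay among the three pairs that should pin $c$ below $(ZJ)^2$. Carrying out this size control uniformly across the finitely many sign patterns is the step I anticipate being the most delicate, and it is the genuine mathematical content of the lemma.
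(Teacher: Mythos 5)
Your setup (using Lemma 2 to make the $x_i$ and the $y_i$ distinct, and the Cramer's-rule observation that $c \mid \pm a^{|x_i - x_j|} \pm b^{|y_i - y_j|}$) is correct as far as it goes, but the proof stops exactly where the lemma begins: you never convert that divisibility into the bound $c < (ZJ)^2$, and you say yourself that this conversion is ``the genuine mathematical content of the lemma.'' A divisibility of the form $c \mid \pm a^{\Delta x} \pm b^{\Delta y}$ can only ever yield $c \le a^{\Delta x} + b^{\Delta y}$, which is exponential in the gaps, whereas the target is polynomial in $Z$; you give no mechanism for this collapse, only the expectation that Lemma 1 and Lemma 3 ``should'' pin $c$ down across the sign patterns. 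So the proposal is a plan with its key step missing, not a proof.

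For comparison, the paper's mechanism is genuinely different and is worth internalizing, because it inverts the direction of the estimate. From the pairwise relation $r a^{x_1}(a^{x_2-x_1} \pm 1) = s b^{\min(y_1,y_2)}(b^{|y_2-y_1|} \pm 1)$ and coprimality, the factor $s b^{y_0}$ (with $y_0 = \min(y_1,y_2,y_3)$) divides $a^{x_2-x_1} \pm 1$, so $a^{x_2-x_1} \ge s b^{y_0} - 1$; then Lemma 1 forces the gap $|y_3 - y_2|$ to be divisible by, hence at least, roughly $a^{x_2-x_1}/2^{g-1}$. Since that gap is at most $Z$, an exponentially large quantity is trapped below $Z$, giving $s b^{y_0}$ bounded by about $bZ$. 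Writing $c^{1/t} = s b^{y_0}$, this reads $c < (bZ)^t$. A second application of Lemma 1 with the roles of $a$ and $b$ reversed, combined with Lemma 3 (which gives $s b^{Y_2} > c/2$ for the middle solution in the $y$-ordering), yields $Z > (c/2)/c^{1/t}$, i.e.\ $c < (2Z)^{t/(t-1)}$. The exponents $t$ and $t/(t-1)$ cross at $2$, so whichever of $t \le 2$ or $t > 2$ holds, one of the two bounds gives $c < (ZJ)^2$. Nothing in your proposal substitutes for this use of Lemma 1 as a lower bound on exponent gaps, which is precisely what turns exponential information into a polynomial bound.
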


\begin{proof}
Since $(r a, sb)=1$, we can take $sb$ odd without loss of generality.  Assume (1) has three solutions $(x_1, y_1)$, $(x_2, y_2)$, and $(x_3, y_3)$.  By Lemma 2 (noting that, for the exceptional cases of Lemma 2, $(ZJ)^2 > 9 > c$), we can take $x_1 < x_2 < x_3$ and we can further assume no two of $y_1$, $y_2$, and $y_3$ are equal.  Write $c^{1/t} = s b^{y_0}$ where $y_0=\min(y_1, y_2, y_3)$ and $t$ is some real number.  Let $i, j \in  \{1,2,3\}$ with $i<j$.   Then 
$$r a^{x_i} (a^{x_j - x_i} \pm 1) = s b^{\min(y_i, y_j)} (b^{|y_j - y_i|} \pm 1) \eqno{(7)}$$
where the $\pm$ signs are independent.  Let $n$ be the least positive integer such that $(b^n \pm 1)/ r a^{x_1}$ is an integer prime to $a$, where the $\pm$ sign is independent of (7) (that such $n$ exists follows from (7)).  From (7) we can apply Lemma 1 (with $g$ and $h$ defined as in Lemma 1) to get 
$$Z > |y_3 - y_2| \ge n { a^{x_2 - x_1} \over 2^{g+h-1} } \ge {a^{x_2 - x_1} \over 2^{g-1}} \ge { s b^{y_0} - 1 \over (b+1)/2 } = {c^{1/t} -1 \over (b+1)/2}  \ge {c^{1/t} \over b}  $$
since $b \ge 3$.  Thus, 
$$c < (bZ)^t. \eqno{(8)}$$

Now reorder the three solutions $(x,y)$ as $(X_1, Y_1)$, $(X_2, Y_2)$, $(X_3, Y_3)$ so that $Y_1 < Y_2 < Y_3$. Now, again using Lemma 1, we have 
$$ Z > | X_3 - X_2 | \ge b^{Y_2-Y_1} = {s b^{Y_2} \over s b^{Y_1} } > { c/2 \over c^{1/t} } $$
where the last inequality follows from Lemma 3 (with the roles of $a$ and $b$ reversed).  So now  
$$ c < (2Z)^{{t\over t-1}}. \eqno{(9)}$$
Using (8) if $t \le 2$ and using (9) if $t > 2$, we see that the lemma holds.  
\end{proof}

\begin{Lemma} %5
If (1) has three solutions $(x_1,y_1)$, $(x_2, y_2)$, and $(x_3, y_3)$, then $\max(x_1$, $y_1$, $x_2$, $y_2$, $x_3$, $y_3)$ $\ge \max(r,s,a,b)$.  
\end{Lemma}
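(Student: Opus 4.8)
The plan is to deduce three of the four required inequalities \emph{verbatim} from the inequality chains already assembled in the proof of Lemma 4, and to isolate a single genuinely exceptional configuration for separate treatment. Write $Z=\max(x_1,y_1,x_2,y_2,x_3,y_3)$, so that the assertion is exactly $Z\ge\max(r,s,a,b)$. First I would reduce to a clean generic position. If two of the three solutions share the same value of $x$, then by Lemma 2 we are in one of the four listed cases $(3,2,1,1,1),(3,2,5,1,1),(5,2,3,1,1),(3,2,7,1,1)$; each has $r=s=1$ and small $a,b$, and direct inspection exhibits the three relevant solutions with $Z\ge\max(a,b)=\max(r,s,a,b)$ (for instance $(3,2,1,1,1)$ has the solution $(x,y)=(2,3)$ and $(5,2,3,1,1)$ has $(3,7)$), so the conclusion holds. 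Hence I may assume the three solutions have distinct $x$-values and (by Lemma 2 applied with the roles of the two terms interchanged) distinct $y$-values; reorder so that $x_1<x_2<x_3$. Using the symmetry of (1) under $(r,a,x)\leftrightarrow(s,b,y)$ together with $(ra,sb)=1$, I take $sb$ odd, so that $b\ge 3$.

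Next I would read off $Z>\max(r,s,b)$ with no case distinction. The chain displayed in the proof of Lemma 4 gives, with $y_0=\min(y_1,y_2,y_3)\ge 1$,
$$Z > \frac{a^{x_2-x_1}}{2^{g-1}} \ge \frac{sb^{y_0}-1}{(b+1)/2} \ge \frac{sb^{y_0}}{b}=sb^{y_0-1}\ge s,$$
so $Z>s$; note the factor $2^{g-1}$ is absorbed in the first inequality, so no hypothesis on $g$ is needed here. Re-running the same argument with the roles of $a$ and $b$ reversed (reordering the solutions so the $y$'s increase to $Y_1<Y_2<Y_3$, with $x_0=\min x\ge1$) and observing that now the \emph{divided} base is $b$, which is odd, so the exceptional value $g>1$ of Lemma 1 cannot arise, I obtain the clean chain $Z>b^{Y_2-Y_1}\ge r a^{x_0-1}\ge r$; the same display also yields $Z>b^{Y_2-Y_1}\ge b$. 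Thus $Z>\max(r,s,b)$.

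The remaining inequality $Z\ge a$ is the main obstacle, precisely because the factor $2^{g-1}$ in direction A is attached to $a$ itself. From $Z>a^{x_2-x_1}/2^{g-1}\ge a/2^{g-1}$ I get $Z>a$ at once whenever $g=1$, hence in every case except $a\congruent 2\bmod 4$, $r$ odd, and $x_1=1$; in that case $2^g\mid b\pm1$ forces $2^{g-1}\le(b+1)/2$. If $a\le b$ then already $Z>b\ge a$. If $a>b$ then $b+1\le a$, so $2^{g-1}\le a/2$, and if moreover $x_2-x_1\ge 2$ this gives $Z>2a^2/(b+1)\ge 2a>a$. The one surviving configuration is $a\congruent 2\bmod 4$, $a>b\ge 3$, $r$ odd, $x_1=1$, $x_2=2$, where Lemma 1 saturates at $x_2-x_1=1$ (no pair of solutions has minimal $x$-exponent exceeding $2$, so Lemma 1 cannot be applied with a larger gap) and so cannot be pushed further.

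I expect this narrow case to be the real work. My intended resolution mirrors the endgame of Lemma 2: I would combine the congruence $ra\mid b^{\,|y_i-y_j|}\pm1$ arising from (7) for the pair with $x=x_1=1$ with the size bound $ra^{x_3}>c$ of Lemma 3, and, if needed, with Theorem 1 of \cite{Sc}, to show that in this configuration no genuine third solution survives outside the cases already listed in Lemma 2, so that the inequality $Z\ge a$ holds there as well (vacuously or by the direct check). Assembling the three clean bounds of the second step with this last case yields $Z\ge\max(r,s,a,b)$.
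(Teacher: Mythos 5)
There is a genuine gap, and it sits exactly where you expect it: the ``one surviving configuration'' ($a \congruent 2 \bmod 4$, $a > b \ge 3$, $r$ odd, $x_1 = 1$, $x_2 = 2$) is never actually resolved. What you offer for it is a plan (``I would combine \dots, if needed, with Theorem 1 of \cite{Sc}, to show that no genuine third solution survives''), not an argument, so the proof as written is incomplete. Worse, the proposed resolution would be a substantial piece of work: ruling out a third solution outright in that configuration is a much stronger claim than the lemma requires, and nothing in your sketch indicates how the congruence $ra \mid b^{|y_i - y_j|} \pm 1$ and the bound $ra^{x_3} > c$ would combine to force a contradiction.

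The irony is that the hard case is a phantom created by an unnecessary weakening in your own second step. In the reversed-direction chain you wrote $Z > b^{Y_2 - Y_1} \ge r a^{x_0 - 1} \ge r$, discarding a factor of $a$. But the divisibility underlying that inequality is $r a^{x_0} \mid b^{Y_2 - Y_1} \pm 1$ with $x_0 = \min(X_1, X_2) \ge 1$, which gives $b^{Y_2 - Y_1} \ge r a^{x_0} - 1 \ge ra - 1$; since everything is an integer, $Z > ra - 1$ yields $Z \ge ra \ge \max(r, a)$. Because your reversed direction divides by the odd base $b$, the factor $2^{g-1}$ genuinely never appears there, so this bound holds unconditionally under your normalization $sb$ odd --- in particular it covers the $a \congruent 2 \bmod 4$ configuration you isolated. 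This is precisely how the paper's proof closes: the forward direction (dividing by powers of $a$) is only used to extract $Z > s$, where the $2^{g-1}$ can be absorbed via $2^{g-1} \le (b+1)/2$, while $\max(a, b, r)$ is extracted entirely from the direction whose dividing base is odd. With that one-line repair your argument becomes complete and is then essentially identical to the paper's.
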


\begin{proof}
Assume (1) has three solutions $(x_1, y_1)$, $(x_2, y_2)$, and $(x_3, y_3)$.  By Lemma 2 (noting that for the exceptional cases of Lemma 2 we must have $y_3 \ge a = \max(r,s,a,b)$), we can take $x_1 < x_2 < x_3$ and can further assume no two of $y_1$, $y_2$, and $y_3$ are equal.  Let $n$ be the least positive integer such that $(b^n \pm 1)/ r a^{x_1}$ is an integer prime to $a$ (that such $n$ exists follows from (7)).  Let $Z = \max(x_1, y_1, x_2, y_2, x_3, y_3)$.  We can apply Lemma 1 and (7) to get 
$$Z > |y_3 - y_2| \ge n { a^{x_2 - x_1} \over 2^{g+h-1} } \ge {a^{x_2 - x_1} \over 2^{g-1}} \ge { s b - 1 \over 2^{g-1} },  $$
where $g$ and $h$ are as in Lemma 1.  If $a$ is odd, then $g=1$ and 
$$Z \ge \max(a,b,s).$$
If $a$ is even, then, using $2^{g-1} \le (b+1)/2$ and $b \ge 3$, we get 
$$Z > { s b - 1 \over (b+1)/2 } \ge s.$$
In the same manner, reversing the roles of $a$ and $b$, we get, when $b$ is odd, 
$$Z \ge \max(a,b, r),$$
and, when $b$ is even, we get
$$ Z > r.$$
Since $(ra, sb) = 1$, at least one of $a$ or $b$ must be odd, so that the lemma holds.  
\end{proof}

\begin{Lemma}  %6
{\rm (Matveev~\cite{Ma} as given in \cite[Theorem 1]{Mi})}
Let $\lambda_1$, $\lambda_2$, $\lambda_3$ be $\ratQ$-linearly independent logarithms of non-zero algebraic numbers and let $b_1$, $b_2$, $b_3$ be rational integers with $b_1 \ne 0$.  Define $\alpha_j = \exp(\lambda_j)$ for $j=1$, 2, 3 and
$$\Lambda  = b_1 \lambda_1 + b_2 \lambda_2 + b_3 \lambda_3.$$
Let $D$ be the degree of the number field $\ratQ(\alpha_1, \alpha_2, \alpha_3)$ over $\ratQ$.  Put
$$\chi = [\realR(\alpha_1, \alpha_2, \alpha_3):\realR].$$
Let $A_1$, $A_2$, $A_3$ be positive real numbers, which satisfy 
$$A_j \ge \max\{ D h(\alpha_j), |\lambda_j|, 0.16\} \qquad (1 \le j \le 3).$$
Assume that
$$B \ge \max\{ 1, \max\{ |b_j|A_j/A_1 : 1 \le j \le 3 \} \}.$$
Define also 
$$C_1 = { 5 \times 16^5 \over 6 \chi}  e^3 (7 + 2 \chi) \left({3e \over 2}\right)^\chi \Big(20.2 + \log\big(3^{5.5} D^2 \log(eD)\big)\Big).$$
Then
$$\log|\Lambda| > -C_1 D^2 A_1 A_2 A_3 \log\big(1.5 e D B \log(eD)\big).$$
\end{Lemma}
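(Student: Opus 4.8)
Since Lemma 6 is Matveev's lower bound for a linear form in three logarithms, quoted here verbatim from Mignotte's formulation, it is not a result one establishes by elementary manipulation; it belongs to the theory of linear forms in logarithms originating with Baker, and any genuine proof rests on the full transcendence machinery. The plan therefore is not to derive it from the preceding lemmas of this paper — those are downstream consumers of Lemma 6 — but to outline the Baker-type argument by which such a bound is proved. The overall strategy is proof by contradiction: assume $|\Lambda|$ is smaller than the asserted lower bound, and construct an auxiliary object whose existence this smallness forces but which a zero estimate forbids.

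First I would set up the auxiliary function. Writing $\alpha_j = \exp(\lambda_j)$, one builds a nonzero polynomial $P$ in three variables, of prescribed degrees $L_1, L_2, L_3$, such that the associated function $\Phi(z) = P(\alpha_1^z, \alpha_2^z, \alpha_3^z)$ vanishes to high order on a grid of rational integer points. The coefficients of $P$ are produced by Siegel's lemma (the Thue--Siegel box principle), and the role of the quantities $A_j \ge \max\{D h(\alpha_j), |\lambda_j|, 0.16\}$ is precisely to bound the arithmetic size of these coefficients: the heights $h(\alpha_j)$ control the denominators and absolute values entering the counting argument, while $D = [\ratQ(\alpha_1,\alpha_2,\alpha_3):\ratQ]$ bounds the number of places that must be accounted for. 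The parameter $B$ plays the role of the height of the unknown integer vector $(b_1,b_2,b_3)$.

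Next comes the analytic heart of the method, the extrapolation. Using the hypothesis that $|\Lambda| = |b_1\lambda_1 + b_2\lambda_2 + b_3\lambda_3|$ is extremely small, one shows that $\Phi$, constructed to be small on an initial set of points, remains small on a much larger set. The mechanism is the maximum modulus principle together with a Schwarz-type lemma: an analytic function small at many points, with controlled growth, is small throughout a disc, hence small — and by the arithmetic size bound forced to vanish — at many further lattice points. Iterating this extrapolation converts the single inequality $|\Lambda| < \cdots$ into an enormous number of vanishing conditions on $\Phi$.

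Finally I would invoke a zero estimate (a multiplicity estimate in the style of Philippon or Masser--Wüstholz): a nonzero polynomial of the prescribed degree cannot vanish at so many points with such high multiplicity unless the three exponentials satisfy a multiplicative relation — that is, unless $\lambda_1,\lambda_2,\lambda_3$ are $\ratQ$-linearly dependent, which is excluded by hypothesis. This contradiction yields the lower bound. The hardest and most delicate part is not any single step but the simultaneous, fully explicit optimization of all the parameters $L_1,L_2,L_3$, the number of extrapolation steps, and the grid sizes, so as to extract the precise constant $C_1$ with its sharp dependence on the number of logarithms; this careful bookkeeping, including the inductive device that produces the favorable dimension dependence, is Matveev's principal technical contribution, and the reason we quote the result rather than reprove it.
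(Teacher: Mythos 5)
Your treatment matches the paper's: Lemma 6 appears there purely as a quoted external result, attributed to Matveev as formulated in Mignotte's Theorem 1, with no proof supplied, which is exactly your conclusion that the statement should be cited rather than reproved. Your outline of the Baker-method machinery (auxiliary polynomial via Siegel's lemma, Schwarz-type extrapolation, zero estimate, and Matveev's explicit optimization of the constants) is a reasonable description of how the bound is established in the cited sources, but it is supplementary; the paper itself contains no such argument and simply invokes the result.
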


\begin{proof}[Proof of Theorem 1]
Recall that, for a given solution, $(x,y)$ uniquely determines $(u,v)$, so we refer to a solution $(x,y)$.  Assume (1) has three solutions $(x_1,y_1)$, $(x_2, y_2)$, and $(x_3, y_3)$.  
Let $Z = \max(x_1, y_1, x_2, y_2, x_3, y_3)$.  Without loss of generality, we can take $Z=\max(x_3, y_3)$.  Also let $D= \max(r a^{x_3}, s b^{y_3})$, $d = \min(r a^{x_3}, s b^{y_3})$, $J = \max(a,b)$, and $j = \min(a,b)$.  By Lemma 3, $D > c$, so that 
$$|r a^{x_3} - s b^{y_3}| = D - d = c.$$
Let 
$$\Lambda = |\log(r/s)+ x_3 \log(a)-y_3 \log(b)|=\log(D) - \log(d)= \log(1 + c/d)< c/d$$
so that 
$$\log(1/\Lambda) > \log(d) - \log(c).  \eqno{(10)}$$
Theorem 1 of \cite{Sc-St2}, along with the list of solutions in Theorem 6 of \cite{Sc-St2}, handles the case $rs=1$ and, therefore, also handles the case in which both $r$ and $a$ are powers of the same base and $b$ and $s$ are powers of the same base, so from here on we assume $rs>1$ and also assume that there do not exist nonnegative integers $h$, $k$, $w_1$, $w_2$, $z_1$, and $z_2$ such that $r = h^{w_1}$, $a=h^{w_2}$, $s=k^{z_1}$, and $b=k^{z_2}$.  

Now we can apply Lemma 6 with $(\alpha_1, \alpha_2, \alpha_3) = (r/s, a, b)$, $(A_1, A_2, A_3) =$ ($\log(\max(r,s))$, $\log(a)$, $\log(b)$), and $B = Z$ to get 
$$\log(1/\Lambda) < C A_1 A_2 A_3 \log(1.5 e B), \eqno{(11)}$$
 where $C = 1.6901816335 \cdot 10^{10}$.  Combining (10) and (11) we get 
$$
\log(d) < \log(c) + C  \log(\max(r,s)) \log(a) \log(b) \log(1.5 e Z). \eqno{(12)}$$
Also $\Lambda = \log(1+c/d) \le \max(\log(c), \log(2))$, so that, adding $\Lambda$ to both sides of (12), we get
$$
\log(D)< \max(\log(c), \log(2)) + \log(c) 
 + C  \log(\max(r,s)) \log(a) \log(b) \log(1.5 e Z).   
\eqno{(13)}$$
Using (13) and noting that $Z \log(j) \le \log(D) $, we have
$$ Z \log(j) < \max(\log(c), \log(2)) + \log(c)+ 
 C  \log(\max(r,s)) \log(a) \log(b) \log(1.5 e Z).$$
Dividing through by $\log(j)$ and noting $j \ge 2$, we get 
$$
Z < { \max(\log(c), \log(2)) + \log(c) \over \log(2)} 
+ C \log(\max(r,s)) \log(J) \log(1.5 e Z). 
\eqno{(14)}$$
Now applying Lemma 4, we have $c < (ZJ)^2$, so that (14) gives 
$$Z < { 4 \log(ZJ) \over \log(2)} + C \log(\max(r,s)) \log(J) \log(1.5 e Z ). $$
Now applying Lemma 5 we have 
$$Z < {8 \log(Z) \over \log(2)} + C (\log(Z))^2 \log(4.078 Z).$$
From this we obtain $Z < 8 \cdot 10^{14}$.  Applying Lemma 5 again proves the theorem.     
\end{proof} 

\begin{proof}[Proof of Corollary to Theorem 1:]  
The exceptions listed in the formulation of the corollary are all given by Theorem 1 of  \cite{Sc-St2}, which handles the case $rs=1$ and, therefore, handles the case in which $r$ and $a$ are both powers of the same base and $b$ and $s$ are both powers of the same base, so from here on we exclude these cases from consideration.  Since any solution to (1) can be rewritten as a case in which $a \notdiv r$, $b \notdiv s$, and $a$ and $b$ are not perfect powers, we use these restrictions in proceeding with a computer search.  

Suppose we have three solutions $(x_k, y_k, u_k, v_k)$ with $k=1$, 2, and 3.  For $i, j \in \{ 1, 2, 3\}$, we rewrite $(-1)^{u_i} r a^{x_i} + (-1)^{v_i} s b^{y_i} = c =(-1)^{u_j} r a^{x_j} + (-1)^{v_j} s b^{y_j}$ as 
$$r a^{x_0}(a^{x_h-x_0} +(-1)^m) = s b^{y_0}(b^{y_h - y_0} + (-1)^n ) \eqno{(15)}$$
where $x_0 = \min(x_i, x_j)$, $y_0 = \min(y_i, y_j)$, $x_h = \max(x_i, x_j)$, $y_h = \max(y_i, y_j)$,and $m$ and $n$ are in $\{0,1\}$.

For each choice of $(r,a,s,b)$ we use the technique known as \lq bootstrapping' (see \cite{GLS} and \cite{St1}) to find increasingly stringent congruence conditions on the exponents $x_h-x_0$ and $y_h-y_0$.  When these conditions show that either $x_h-x_0$ or $y_h-y_0$ exceeds $8 \cdot 10^{14}$, by Theorem 1 there can be no third solution.  (The bootstrapping in \cite{GLS} and \cite{St1} deals only with the case $m=n=1$ but the ideas extend easily to the other choices of signs, indeed, more easily since for $m=0$ or $n=0$ parity considerations often lead to a contradiction, as in the proof of Lemma 9 of \cite{Sc-St2}.  The Maple programs used can be found on the second author's website \cite{St2}.)  
Since we have three solutions to (1), Lemma 2 shows that for at least one pair of solutions we have $x_0 \ge 2$ (alternatively, $y_0 \ge 2$).  For a given $(r,a,s,b)$, we treat each pair $(m,n)$:  when $(m,n)=(0,0)$, $(0,1)$, or $(1,0)$, we take first $x_0 \ge 2$ and $y_0 \ge 1$, and then take $x_0 \ge 1$ and $y_0 \ge 2$, and use the bootstrapping method to find those few cases allowing two solutions; when $m=n=1$, we take $x_0 \ge 2$ and $y_0 \ge 2$ and bootstrap in the same way (to see that these lower bounds on $x_2$ and $y_2$ are justified, we note the following: if $(x_0, y_0)$ is a solution to (1) then Lemma 2 shows that we can get (15) with $\min(x_0, y_0) \ge 2$.  If $(x_0, y_0)$ is not a solution to (1), then it is not hard to see that we cannot have any two different solutions to (15) both having $m=n=1$ since then all three solutions to (1) would have the same $(u,v)$ which is impossible unless $(x_0, y_0)$ is a solution to (1); so we can assume that in this case at least one solution to (15) has $(m,n) \ne (1,1)$ with $\max(x_0, y_0) \ge 2$.)  
In the ranges under consideration, we find only a small number of choices of $(r,a,s,b)$ for which (1) could have three solutions.

Choose one such $(r,a,s,b)$; for each of the four possible choices of $(m,n)$, we use the bootstrapping method again to find numbers $k_x$ and $k_y$ such that if either $x_0>k_x$ or $y_0>k_y$ then (15) has no solution.  For each $(m,n)$, we take each pair $(x_0,y_0)$ such that $x_0 \le k_x$ and $y_0 \le k_y$; either bootstrapping finds a solution $(x_0, y_0, x_h, y_h, m,n)$ to (15) with the minimal $x_h$ and $y_h$, or bootstrapping shows no solution exists for that $(x_0, y_0)$.  In this way we obtain a complete list of quadruples $(x_0, y_0, m,n)$ that could occur in a solution of (15), for each of which we have found one solution $(x_0, y_0, x_h, y_h, m,n)$.  Suppose one such quadruple admits a second solution $(x_0, y_0, x_H, y_H, m,n)$.  Then we must have 
$$r a^{x_h} - s b^{y_h} = r a^{x_H} - s b^{y_H},$$
with $x_h<x_H$ and $y_h<y_H$, so that our list of quadruples must include $(x_h, y_h, 1,1)$.  We easily check that such $(x_h, y_h, 1,1)$ does not appear as one of the listed quadruples.  Thus, each quadruple on the list has only one solution, so that the associated solutions $(x_0, y_0, x_h, y_h, m,n)$ give all solutions to (15).  For each such solution, there are two possible values of $c$.  We compute all these $c$ values and find that no two are equal, confirming the corollary for this choice of $(r,a,s,b)$.  Continuing in this way for each $(r,a,s,b)$ completes the proof of the corollary.   
\end{proof}

The following theorem simply observes that cases of exactly two solutions to (1) are commonplace and easy to construct.  

\begin{Theorem} %2
There are an infinite number of cases of exactly two solutions to (1).  More specifically, for every choice of $(a,b)$, where $a$ and $b$ are relatively prime and not perfect powers, there are an infinite number of choices of $(x_1, y_1)$ such that for every quadruple $(a,b,x_1, y_1)$ there are an infinite number of quintuples $(r, s, c, x_2, y_2)$ with $(ra, sb) = (r,a) = (s,b) = 1$ such that both $(x_1, y_1)$ and $(x_2, y_2)$ give solutions to (1), and no further solution exists.  
\end{Theorem}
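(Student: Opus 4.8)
The plan is to prove this by an explicit construction, reducing the claim of \emph{exactly} two solutions to the claim of \emph{at least} two solutions by invoking Theorem~1. Fix relatively prime non-perfect-powers $a,b$. I would build, from a choice of exponent gaps $d = x_2-x_1 \ge 1$ and $e = y_2-y_1 \ge 1$, a quintuple $(r,s,c,x_2,y_2)$ together with a base point $(x_1,y_1)$ so that both pairs solve (1) with the same sign pattern. Requiring two solutions $ra^{x_i}-sb^{y_i}=\pm c$ and subtracting leaves the single relation $r(a^{x_2}-a^{x_1}) = s(b^{y_2}-b^{y_1})$, so I would set $g = \gcd(a^{x_2}-a^{x_1},\,b^{y_2}-b^{y_1})$, $r = (b^{y_2}-b^{y_1})/g$, $s = (a^{x_2}-a^{x_1})/g$, and $c = |ra^{x_1}-sb^{y_1}|$. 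Then $(r,s)=1$ automatically, and since $a,b>1$ are coprime with $d,e\ge 1$ one computes $c = (a^{x_1}b^{y_1}/g)\,|b^e-a^d| > 0$ (equality $b^e=a^d$ is impossible for coprime bases); thus $c$ is a genuine positive integer and both pairs are honest positive-integer solutions of (1), the common sign of $ra^{x_1}-sb^{y_1}$ fixing $(u,v)$.

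The heart of the matter is to secure $(ra,sb)=(r,a)=(s,b)=1$, which is equivalent to $a,b,r,s$ being pairwise coprime. Writing $r = b^{y_1}(b^e-1)/g$ and $s = a^{x_1}(a^d-1)/g$, the only dangerous factors are the $a^{x_1}$ inside $s$ and the $b^{y_1}$ inside $r$. I would kill both by choosing the base point through \emph{exact} divisibility: let $x_1$ be the largest integer with $a^{x_1}\mid b^e-1$ and $y_1$ the largest integer with $b^{y_1}\mid a^d-1$, restricting to $d,e$ with $\mathrm{ord}_a(b)\mid e$ and $\mathrm{ord}_b(a)\mid d$ so that $x_1,y_1\ge 1$. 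Because $\gcd(a^d-1,a)=1$ and $\gcd(b^e-1,b)=1$, the $a$-part of both $a^{x_2}-a^{x_1}$ and $b^{y_2}-b^{y_1}$ is then exactly $a^{x_1}$, and their $b$-part is exactly $b^{y_1}$; hence $g$ absorbs these powers completely, and a one-line check gives $(r,a)=(s,a)=(r,b)=(s,b)=1$. Combined with $(r,s)=1$ and $(a,b)=1$, this furnishes all the required relations.

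For the infinitude I would exploit that $x_1$ and $y_1$ stay constant along long families of $e$ and $d$. Writing $\nu = \mathrm{ord}_a(b)$ and taking $e=\nu m$ with $\gcd(m,a)=1$, the lifting-the-exponent lemma shows (for odd $a$) that the defining value $x_1$ is one fixed number, independent of $m$; inserting a factor $a^j$ into $m$ raises that common value by $j$. Thus each attainable $x_1$ is shared by infinitely many $e$, and infinitely many distinct $x_1$ occur; symmetrically for $y_1$ as $d$ varies. I would then fix $y_1$ (its set of admissible $d$ being infinite) and let $x_1$ range over infinitely many values (each with an infinite set of admissible $e$), yielding infinitely many base points $(x_1,y_1)$; for each such point the infinitely many pairs $(d,e)$ give infinitely many distinct quintuples with $(x_2,y_2)=(x_1+d,\,y_1+e)$. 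The prime $2$ (when one base is even---only one can be, as $(a,b)=1$) and composite bases require the $2$-adic and prime-by-prime forms of the same lemma, but the counting is unchanged.

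Finally, every member of this family has $\min(x_i,y_i)\ge 1$ and $(ra,sb)=1$, so Theorem~1 applies: since $x_2,y_2\to\infty$ along the family, all but finitely many members have a parameter exceeding $8\cdot10^{14}$ and hence at most two solutions in positive integers---so exactly two, matching our construction. The same holds within each fixed-$(x_1,y_1)$ subfamily, giving the nested statement. I expect the principal obstacle to be exactly the simultaneous coprimality $(ra,sb)=(r,a)=(s,b)=1$, which the exact-divisibility choice of $(x_1,y_1)$ is engineered to force, together with verifying the valuation counts underlying the infinitude; the genuinely fiddly step is the $2$-adic case of lifting-the-exponent. One remaining minor point is to rule out an extra solution with $x=0$ or $y=0$: I would treat this by the convention that the relevant count is that of Theorem~1, or by a short divisibility argument excluding such boundary solutions for all but finitely many members, the substantive content being the construction above together with Theorem~1.
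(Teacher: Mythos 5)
Your overall strategy is the same as the paper's: construct the second solution from the identity $r\,a^{x_1}(a^{d}-1)=s\,b^{y_1}(b^{e}-1)$ by tying $(x_1,y_1)$ to the power of $a$ dividing $b^{e}-1$ and the power of $b$ dividing $a^{d}-1$, and then invoke Theorem~1 (letting $x_2\to\infty$) to exclude a third solution. The paper's proof is exactly this, with the divisibility input packaged in its definition of $m(a,b)$ and a citation of Ribenboim for existence. The divergence is in what divisibility condition you impose, and there your argument has a genuine gap.

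The coprimality step fails when $a$ (or $b$) is composite. You define $x_1$ as the \emph{largest} integer with $a^{x_1}\mid b^{e}-1$ and claim that $g$ then "absorbs these powers completely," so that a one-line check gives $(r,a)=1$. But maximality of $x_1$ controls only the power of $a$ itself, not the exponents of the individual primes dividing $a$, and $r=b^{y_1}(b^{e}-1)/g$ can retain a prime factor of $a$. Concretely, take $a=21$, $b=2$ (coprime, neither a perfect power), $e=6=\mathrm{ord}_{21}(2)$, $d=1$: then $b^{e}-1=63=3^2\cdot 7$ gives $x_1=1$, and $a^{d}-1=20$ gives $y_1=2$; your recipe yields $a^{x_2}-a^{x_1}=420$, $b^{y_2}-b^{y_1}=252$, $g=84$, hence $r=3$, $s=5$, and $\gcd(r,a)=3$, violating the requirement $(r,a)=1$ in the statement (the pair $3\cdot 21^{x}-5\cdot 2^{y}=43$ for $(x,y)=(1,2),(2,8)$ really is a two-solution instance, but not an admissible quintuple). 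What you actually need is the condition built into the paper's $m(a,b)$: $b^{e}-1=a^{x_1}l$ with $\gcd(l,a)=1$, i.e.\ the exponent of \emph{each} prime $p\mid a$ in $b^{e}-1$ equals $x_1$ times its exponent in $a$. This is strictly stronger than maximal divisibility, and arranging it requires balancing the valuations at the different primes of $a$ by the choice of $e$ (in the example one must pass from $e=6$ to $e=42$, where $2^{42}-1=21^2\, l$ with $\gcd(l,21)=1$); this existence statement is precisely why the paper cites Ribenboim. Your remark that composite bases "require the prime-by-prime form of the same lemma, but the counting is unchanged" addresses only the infinitude count, not the definition of $(x_1,y_1)$ on which your coprimality check rests. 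Once the condition is strengthened in this way, the rest of your argument (gcd-normalized $r,s$, positivity of $c$, the lifting-the-exponent families, and the appeal to Theorem~1) does go through and coincides with the paper's proof.
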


\begin{proof}  
We define $m(a,b)$ as the least value of $m > 1$ such that $b^n \pm 1 = a^m l$ for some integer $n$ and for some integer $l$ such that $\gcd(l,a)=1$.  
As pointed out in \cite{Be}, such $m(a,b)$ exists by, e.g., Ribenboim~\cite[C6.5]{Ri}.  (The definition here differs slightly from that in \cite{Be}.)  

For any choice of $(a,b)$ such that $(a,b)=1$ and $a$ and $b$ are not perfect powers, choose $x_1 \ge m(a,b)$ and $y_1 \ge m(b,a)$.  Then it is easily seen that we can find infinitely many choices of $r$, $s$, $x_2$, $y_2$, with $(ra, sb)= (r,a) = (s,b) = 1$, such that
$$r a^{x_1} (a^{x_2-x_1} \pm 1) = s b^{y_1} (b^{y_2-y_1} \pm 1). \eqno{(16)} $$
By Theorem 1, we can take $x_2$ sufficiently high to ensure no third solution exists.  
\end{proof}

\section{$(u, v) = (0, 1)$, with $\gcd(ra, sb)$ unrestricted and $\min(x,y) \ge 0$}  %3

Let $a>1$, $b>1$, $c>0$, $r>0$ and $s>0$ be positive integers.  
In this section we give a result on the number of solutions in nonnegative integers $(x,y)$ to the equation
$$r a^x - s b^y = c. \eqno{(17)}$$
Throughout this section we allow $(ra,sb)>1$ and $\min(x,y)=0$.  When $(ra,sb)=1$ and $\min(x,y)>0$, note that (17) is (1) with $(u,v) = (0,1)$.  

To eliminate cases of multiple solutions to (17) which are directly derived from other cases by multiplying each term of each solution by a fixed positive number $k$, we use the following definition: 
let $(x_1,y_1)$ be the least of $N$ solutions to (17); then, if there exists a positive integer $k>1$ such that $r a^{x_1} / k = r_1 a^w$ and $s b^{y_1} / k = s_1 b^z$ for positive integers $r_1$, $s_1$ and nonnegative integers $w$, $z$, the set of $N$ solutions is called {\it reducible}.  (Each irreducible set of solutions corresponds to an infinite number of reducible sets of solutions.)  

We also want to eliminate from consideration cases in which $a \mid r$ or $b \mid s$; we call such solutions {\it improper}.  

Finally, we want to eliminate from consideration cases where $a$ or $b$ is a perfect power; we call such solutions {\it redundant}.  

Just as it is easy to construct cases of exactly two solutions to (1) (see Theorem 2 above), it is easy to construct cases of exactly two solutions to (17), where the pair of solutions $(x_1, y_1)$ and $(x_2, y_2)$ is not reducible, improper, or redundant, since we can obtain $(ra, sb) = (r,a) = (s,b)=1$ as in (16).  (The solutions obtained in this way will have $\min(x,y)>0$.)  Note that in (16) we can always choose the signs on both the right and the left to be minus (when $a$ is even this may require taking $x_1 \ge m(a,b)+1$; similarly for $b$ even).

We now consider cases of more than two solutions to (17).  

\begin{Theorem}  %3
There are at most three solutions in nonnegative integers $(x,y)$ to (17).  There are an infinite number of cases of (17) with three solutions, even if we exclude reducible sets of solutions and also exclude solutions which are improper or redundant.     
\end{Theorem}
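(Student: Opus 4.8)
The plan is to split the statement into its two independent halves. For the upper bound "at most three solutions," the strategy is to reduce equation (17) to the setting already handled by Theorem 1. First I would clear common factors: by the reducibility and properness conventions we may assume the solution set is irreducible, proper, and nonredundant, which lets me write any multiple-solution configuration so that the two terms on the left share no factor with $c$. The main subtlety is that Theorem 1 assumed $\gcd(ra,sb)=1$ and $\min(x,y)>0$, whereas here I must allow $\gcd(ra,sb)>1$ and $\min(x,y)=0$. So the key step is to show that a set of four or more solutions to (17) would, after pulling out the largest common power of the primes dividing $\gcd(a,b)$ (or dividing both sides) and after absorbing the $\min(x,y)=0$ boundary cases, force a configuration that either violates Theorem 1 directly or collapses one solution into a duplicate. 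I expect to handle the $\min(x,y)=0$ cases by a direct argument: if some solution has $x=0$ or $y=0$, the equation becomes $r - sb^y = c$ or $ra^x - s = c$, which pins down $c$ tightly and, combined with a second and third solution, leaves only finitely much room, ruling out a fourth.

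For each pair of solutions I would form the now-familiar difference relation, an analogue of (7) or (15),
$$r a^{x_0}\bigl(a^{x_h-x_0}\pm 1\bigr)=s b^{y_0}\bigl(b^{y_h-y_0}\pm 1\bigr),$$
and apply Lemma 1 to extract the divisibility constraints on the exponent gaps. With three solutions giving $x_1<x_2<x_3$ and distinct $y$-values (as in Lemmas 2 through 5), a fourth solution supplies a fourth exponent pair, and the Lemma 1 divisibility chains grow geometrically while Lemmas 3 and 4 bound $c$ from above; the two forces are incompatible once there are four solutions. The cleanest route is probably to observe that the $(u,v)=(0,1)$ restriction here means both terms carry fixed signs, so Lemma 2's analysis of equal-$x$ solutions and the sign bookkeeping simplify, and the argument of Theorem 1 applied to the "relatively prime core" of (17) already caps the count at three; a fourth solution would contradict that cap.

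For the second half, the infinitude of triple-solution cases, I would exhibit an explicit family. The anchor is the sporadic triples already surfaced in the introduction and in Lemma 2, for instance the $(a,b,c,r,s)=(3,2,1,1,1)$ pattern with $3-2=1$, $3-2^1=1$ (reindexed) and $3^1-2=1$, and more robustly the Catalan/Pillai coincidences among small powers. The plan is to take a fixed base triple with three solutions and then scale: multiply through by suitable coprime factors $r,s$ and shift a common power to generate infinitely many genuinely distinct quintuples $(r,s,c,x,y)$. The hard part will be certifying that the scaled families remain irreducible, proper, and nonredundant simultaneously, since the definitions of reducible, improper, and redundant each forbid a different kind of common structure; I would arrange $r,s$ to be prime to $a,b$ respectively (so proper and, with $a,b$ not perfect powers, nonredundant) and choose the free parameter so no single $k>1$ divides out of the least solution (so irreducible), exactly mirroring how (16) was used to produce infinitely many two-solution cases in Theorem 2, but now seeded from a three-solution base rather than a two-solution one.
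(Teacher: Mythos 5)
Your plan for the upper bound cannot work, and the clearest evidence is the second half of the theorem itself. Theorem 1 bounds the number of solutions by \emph{two} (outside a finite exceptional set), so if your reduction of (17) to a ``relatively prime core'' satisfying the hypotheses of Theorem 1 were valid, you would conclude that (17) has at most two solutions apart from finitely many cases --- contradicting the fact (asserted in Theorem 3 and realized by the families in (20)) that there are infinitely many irreducible, proper, nonredundant instances with exactly three solutions. The point is that the failure of coprimality in those families is essential: in (20) one has $b = dA$ with $A = a_0^j$, so $\gcd(a,b) = a_0 > 1$, and no normalization by irreducibility or properness removes this. Moreover the whole Lemma~1 machinery you invoke is vacuous when $\gcd(a,b)>1$, since then no power of $b$ is $\equiv \pm 1 \bmod a$. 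The paper's actual argument uses none of the linear-forms apparatus: it shows that three solutions force $x_2 - x_1 \mid x_3 - x_1$ and $y_2 - y_1 \mid y_3 - y_1$ (via a gcd argument on the difference relations, culminating in (18)), whence setting $A = a^{x_2-x_1}$, $B = b^{y_2-y_1}$, $m = (x_3-x_1)/(x_2-x_1)$, $n = (y_3-y_1)/(y_2-y_1)$ produces a solution of the Goormaghtigh equation $(A^m-1)/(A-1) = (B^n-1)/(B-1)$; a fourth solution of (17) would give a second solution $(m,n)$ for the \emph{same} pair $(A,B)$, contradicting the He--Togb\'e uniqueness theorem. This yields the absolute bound of three, with no exceptional cases --- something no Matveev-type estimate can deliver.

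Your construction for the infinitude half also fails, for a definitional reason: multiplying $ra^x$, $sb^y$, and $c$ through by a fixed $k>1$ is precisely the operation that the paper's definition of \emph{reducible} excludes, and absorbing a power of $a$ into $r$ (to shift exponents) produces an \emph{improper} set; so scaling a fixed sporadic triple only ever produces excluded solution sets, never infinitely many admissible ones. In addition, the anchors you cite, such as $(a,b,c,r,s)=(3,2,1,1,1)$, are three-solution cases of equation (1) with unrestricted signs $(u,v)$; under the fixed sign pattern of (17) they do not have three solutions. The paper instead gets infinitude from the Goormaghtigh equation with $n=2$: for every $A>1$ and $m>2$ one has the solution $B = dA$ with $d = (A^{m-1}-1)/(A-1)$, and each such $(A,B,m,2)$ yields, via the explicit parametrization (20), a genuinely distinct triple of solutions to (17); verifying nonredundancy of these (i.e., that $b=dA$ is not a perfect power) is itself nontrivial and requires Bennett's Corollary 1.2(b) on $(x^n-1)/(x-1)=y^q$.
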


\begin{proof}  
Suppose (17) has more than two solutions and let the two least solutions be $(x_1, y_1)$ and $(x_2, y_2)$.  Let $(x_3, y_3)$ be a third solution, taking $x_1 < x_2 < x_3$ and $y_1 < y_2 < y_3$.  Let $R = {r a^{x_1} \over \gcd(r a^{x_1} , s b^{y_1})}$ and $S = {s b^{y_1} \over \gcd(r a^{x_1} , s b^{y_1})}$.   
We have 
$$R (a ^{x_2 - x_1} - 1) = S (b^{y_2 - y_1} - 1)$$
and
$$R (a ^{x_3 - x_1} - 1) = S (b^{y_3 - y_1} - 1).$$
Let
$$t = {a^{x_2-x_1} -1 \over S } = {b^{y_2 - y_1} - 1 \over R }$$
and
$$T = {a^{x_3-x_1} -1 \over S } = {b^{y_3 - y_1} - 1 \over R }.$$
Note that $t$ and $T$ are both integers.  

Let $g_1 = \gcd(x_2 - x_1, x_3-x_1)$ and $g_2 = \gcd(y_2-y_1, y_3-y_1)$.  
Let $k$ be the least integer such that $b^k - 1$ is divisible by $R$.  Then $k$ must divide both $y_2-y_1$ and $y_3-y_1$, so that $k$ divides $g_2$, and 
$$b^{g_2} - 1 = R l_2$$
for some integer $l_2$.  
Similarly, 
$$a^{g_1} - 1 = S l_1$$ 
for some integer $l_1$.  
Since $g_1$ divides both $x_2-x_1 $ and $x_3 -x_1$, $l_1$ divides $t$ and $T$.  
There must be an integer $j$ which is the least positive integer such that $b^j - 1$ is divisible by $R l_1$, and $j$ must divide both $y_2-y_1$ and $y_3-y_1$, so that $j$ divides $g_2$.  Therefore, $l_1 | l_2$.

A similar argument with the roles of $a$ and $b$ reversed shows that $l_2 | l_1$, so that $l_1 = l_2$, and we have 
$$r a^{x_1} (a^{g_1} - 1)= s b^{y_1} (b^{g_2} - 1). \eqno{(18)}$$
(18) shows that $(x_1+g_1, y_1+g_2)$ is a solution to (17).  If $x_1+g_1 \ne x_2$, then, by the definition of $x_2$, we must have $x_1+g_1>x_2$, which is impossible by the definition of $g_1$.   So $x_1+g_1=x_2$ and, similarly, $y_1+g_2=y_2$.  Letting $A = a^{x_2-x_1}$, $B = b^{y_2-y_1}$, $m={x_3-x_1 \over x_2-x_1}$ and $n={y_3-y_1 \over y_2-y_1}$, we find that we have a solution to the Goormaghtigh equation 
$${A^m - 1 \over A-1} = {B^n - 1 \over B-1} \eqno{(19)}$$
in integers $A>1$, $B>1$, $m>1$, $n>1$.  
Note that  both sides of (19) equal the integer $T/t$.  
By the same argument as above, if there exists a fourth solution $(x_4, y_4)$, we have a solution to (19) for the same values of $A$ and $B$ but with $m = {x_4-x_1 \over x_2 - x_1}$ and $n = {y_4-y_1 \over y_2-y_1}$, contradicting Theorem 1.3 of \cite{HT}, which states that, for given $A$ and $B$, (19) has at most one solution $(m,n)$.  

Finally, fixing $n=2$ and $m>2$, there are an infinite number of solutions $(A, B, m)$ to (19) each of which corresponds to a set of three solutions to (17) in which 
$$
(a, b, c, r, s, x_1, y_1, x_2, y_2, x_3, y_3) = (a_0, d A, A(d-1)/h, (dA-1)/h, (A-1)/h, 0,0, j,1, mj, 2) \eqno{(20)}  
$$
where $d = { A^{m-1} -1 \over A-1}$, $h = \gcd(dA-1, A-1)$, and $a_0^j=A$ with $a_0$ not a perfect power.  Suppose $b = dA$ is a perfect power.  Then there must be a prime $q \mid j$ such that $d = d_0^q$ and $A = a_1^q$ for some $d_0$ and $a_1 = a_0^{j/q}$.  This gives $d_0^q = { a_1^{q(m-1)} -1 \over a_1^q - 1}$, requiring $m>3$.  But this contradicts Corollary 1.2(b) of Bennett~\cite{Be2}, which states that the equation $(x^n - 1) / (x-1) = y^q$ in integers $x>1$, $y>1$, $n>2$, $q \ge 2$ has no solution $(x,y,n,q)$ where $x$ is a $q$-th power.  Thus, $b = dA$ is not a perfect power, so that the solutions given in (20) are not redundant.  Since $(r,a)=1$ and $b > s$, the solutions in (20) are not improper.  Finally, since $x_1=y_1 = 0$ and $(r,s)=1$, the set of solutions given by (20) is not reducible.   
\end{proof}

Remark:  Theorem 3 still holds if $\min(x,y)>0$, provided we revise the definition of reducibility to require $w>0$ and $z>0$.  For example, when $\min(x,y)>0$, (20) becomes 
$$(a, b, c, r, s, x_1, y_1, x_2, y_2, x_3, y_3) = (a_0, d A, d A^2(d-1)/h_1, d(dA-1)/h_1, (A-1)/h_1, j, 1, 2j, 2, (m+1)j, 3),$$
where $h_1 = \gcd(d(dA-1), A-1)$.  

The definitions of $A$, $B$, $m$, and $n$ in the proof of Theorem 3 show that any set of three solutions to (17) (for given $a$, $b$, $c$, $r$, $s$) corresponds to a unique solution $(A, B, m, n)$ to the Goormaghtigh equation (19).  On the other hand, each solution $(A,B,m,n)$ to (19) corresponds to an infinite number of sets of three solutions to (17).  However, it is not hard to show that each solution $(A,B,m,n)$ to (19) corresponds to a unique set of three solutions to (17) which is not reducible, improper, or redundant.  

The familiar Goormaghtigh conjecture says that, taking $A<B$, the only solutions $(A,B,m,n)$ to (19) with $n>2$ are given by $(A,B, m,n) = (2,5,5,3)$ and $(2,90,13,3)$.  (See \cite{Go} and \cite{Ra}.)  If this well-known conjecture is true, then the only two choices of $(a, b, c, r, s, x_1, y_1, x_2, y_2, x_3, y_3)$ which are not given by (20) and which give a set of three solutions to (17) which is not reducible, improper, or redundant are: 
$$(2,5,3,1,1,2,0,3,1,7,3), (2,90,88,89,1,0,0,1,1,13,3) \eqno{(21)}.$$ 
If $\min(x,y)>0$, (21) becomes $(2,5,15,5,1,2,1,3,2,7,4), (2, 90, 7920, 4005, 1, 1,1,2,2,14,4).$

\end{document}